\newtheorem{theorem}{Theorem}[section]
\newtheorem{lemma}[theorem]{Lemma}
\newtheorem{conjecture}{Conjecture}[section]
\theoremstyle{definition}
\newtheorem{definition}[theorem]{Definition}
\newtheorem{example}[theorem]{Example}
\newtheorem{corollary}{Corollary}
\theoremstyle{remark}
\newtheorem{remark}[theorem]{Remark}
\newtheorem{question}{Question}
\numberwithin{equation}{section}
\begin{document}

\title{Complete sets}

\author{Theophilus Agama}
\address{Department of Mathematics, African Institute for Mathematical science, Ghana
}
\email{theophilus@aims.edu.gh/emperordagama@yahoo.com}


\subjclass[2000]{Primary 54C40, 14E20; Secondary 46E25, 20C20}

\date{\today}


\keywords{sets, completeness}

\footnote{
\par
}%
.

\begin{abstract}
In this paper we introduce the concept of completeness of sets. We study this property on the set of integers. We examine how this property is preserved as we carry out various operations compatible with sets. We also introduce the problem of counting the number of complete subsets of any given set. That is,  given any interval of integers $\mathcal{H}:=[1,N]$ and letting $\mathcal{C}(N)$ denotes the complete set counting function, we establish the lower bound $\mathcal{C}(N)\gg N\log N$.
\end{abstract}

\maketitle

\section{INTRODUCTION}
The development of set theory  dates back to the days of the German mathematician George Cantor. Infact he was one of the major  pioneers of set theory and it's development,  and so, he is thought today as the major force behind it \cite{ferreiros2008labyrinth}. Today it is widely studied in many areas of mathematics, including number theory, combinatorics, computer science, algebra etc. Intuititively, a set can be thought of as a collection of well-defined objects. The objects in the set can be seen as it's members or elements. These elements do characterize and tell us more about the nature of the set in question. The elements of a set can either be finite or infinite. For example the set $\mathcal{A}:=\{2,5,9,1,-54\}$ denotes a finite set of integers, since all the elements are integers. The set of $\mathbb{R}$ of real numbers and the set $\mathbb{Z}$ of integers are examples of infinite sets. \\ \\In what follows we set  $\mathcal{A}\pm\mathcal{B}:=\{a_i\pm b_i:a_i\in \mathcal{A}~\text{and}~b_i\in \mathcal{B}\}$, $\mathcal{A\cdot B}:=\{a_ib_j:a_i\in \mathcal{A}, b_i\in \mathcal{B}\}$ and $c\cdot \mathcal{A}:=\{ca:a\in \mathcal{A}\}$, $\mathcal{A}\setminus\mathcal{B}:=\mathcal{A}-\mathcal{B}$ for finite sets of integers $\mathcal{A}$ and $\mathcal{B}$. We recall an arithmetic progression of length $n$ to be the set $\mathcal{A}$ of the form $\mathcal{A}=\{a_0, a_0+q, a_0+2q, \ldots, a_0+(n-1)q\}$. In a more special case we have the $\mathcal{A}:=\{q, 2q,\ldots, nq\}$, a homogenous arithmetic progression. For the set $\mathcal{A}:=\{a_0,a_1, \ldots, a_n\}$, we call $\mathcal{A}^{(N)}:=\{a_0', a_1', \ldots, a_n'\}$, where $a_i'=\frac{a_i-a_0}{d(\mathcal{A})}$ and where $d(\mathcal{A})=(a_0-a_0, a_1-a_0, \ldots, a_n-a_0)$ with $0=a_0'<a_1'<\ldots <a_n'$, the normal form of $\mathcal{A}$.

There are various classifications concerning set of integers. For example, the theory of multiple sets and primitive sets is very vast and rich (See \cite{nathanson2000elementary}). A set $\mathcal{F}$ can also be classed as sumfree  if the relation $a+b=c$ is not satisfied in $\mathcal{F}$, for $a,b,c\in \mathcal{F}$. In this paper, however, we study a particular class of sets of integers. 

\section{COMPLETE SETS}
In this section we introduce the concept of completeness of a set. Using various operations compatible with sets, we investigate how this property is preserved. 

\begin{definition}\label{def1}
Let $\mathcal{A}:=\{a_{1},a_2,\cdots, a_n\}$ be a finite set of elements in $\mathcal{B}$, where addition and multiplication is well defined in $\mathcal{B}$. Then $\mathcal{A}$ is said to be complete in $\mathcal{B}$ if there exists some $b\in \mathcal{B}$ such that $\prod \limits_{i=1}^{n}a_i=b\sum \limits_{i=1}^{n}a_i$.
\end{definition}
\bigskip

It follows from the above definition the nature of a complete set will depend on the set $\mathcal{B}$. If we take $\mathcal{B}:=\mathbb{R}$, then the complete set in question will be a complete set of real numbers. Again if we take $\mathcal{B}:=\mathbb{N}$, then the complete set in question will be a complete set of natural numbers. If $\mathcal{B}:=\mathbb{F}[x]$, then any complete set under $\mathcal{B}$ will be a complete set of polynomials. Let us consider the finite set natural numbers $\mathcal{P}:=\{3,5,7\}$. It is easily seen that this set is a complete set of natural numbers. Again we notice that the set $\mathcal{F}:=\{x^2, -x^2, 2x^2\}$ is a complete set of polynomials in $\mathbb{Z}[x]$. However, the set $\{4x^3,7x^3, 10x^3 \}$ is not complete in $\mathbb{Z}[x]$.  So therefore there are, if not infinitely many, complete sets under any given type of set. Every finite set of real numbers is easily seen to be complete in $\mathbb{R}$, hence the concept of completeness is not very interesting in this setting. Thus we examine this concept on the set of integers $\mathbb{Z}$, where it is very strong.

\section{COMPLETENESS IN $\mathbb{Z}$}
In this section we study the concept of completeness of finite sets of integers. In this case the set $\mathcal{B}$ in definition \ref{def1} reduces to the set of all integers. Hence we can rewrite the definition in this particular setting as follows:

\begin{definition}
Let $\mathcal{A}:=\{a_1, a_2, \ldots, a_n\}$ be a finite set of integers. Then $\mathcal{A}$ is said to be complete in $\mathbb{Z}$ if there exists some $b\in \mathbb{Z}$ such that  $\prod \limits_{i=1}^{n}a_i=b\sum \limits_{i=1}^{n}a_i$.
\end{definition}

\subsection{EXAMPLES OF COMPLETE SETS IN $\mathbb{Z}$}
\begin{enumerate}
\item [(i)] The sets $\{3,5,7\}$, $\{-2, 5,3,-1\}$, $\{1,3,2\}$, $\{3,7,11\}$ are examples of sets of integers complete in $\mathbb{Z}$.

\item [(ii)]The sets $\{3,7,9,4, 2\}$, $\{7,11, 13, 15\}$ , $\{1,18, 17,3\}$ are not complete in $\mathbb{Z}$.

\item [(iii)] The sets $\{2,4,6\}$, $\{7,14,21,28, 35\}$, $\{3,5,12\}$ are all complete in $\mathbb{Z}$.
\end{enumerate}

\subsection{PROPERTIES OF COMPLETENESS OF SETS IN $\mathbb{Z}$}
In this section we examine some properties of completeness of finite set of integers in $\mathbb{Z}$. We examine how this property is preserved as we perform various algebras compatible with sets.

\begin{theorem}
Let $\mathcal{A}_1:=\{a_1, a_2, \ldots, a_n\}$ and $\mathcal{A}_2:=\{b_1, b_2, \ldots, b_n\}$ be complete sets in $\mathbb{Z}$. Then the following remain valid:
\begin{enumerate}
\item [(i)]The prodset $\mathcal{A}_1\cdot \mathcal{A}_2$ is also complete in $\mathbb{Z}$.

\item [(ii)]The union $\mathcal{A}_1\cup \mathcal{A}_2$ is complete in $\mathbb{Z}$ provided there exist some $t\in \mathbb{Z}$ such that $a_ib_j=t(a_i+b_j)$ for each $1\leq i,j\leq n$.

\item [(iii)]Let $\mathcal{H}:=\{c_1,c_2, \ldots, c_n\}$. Then the set $\mathcal{A}_1\cup \mathcal{H}$ is complete in $\mathbb{Z}$ provided $\mathcal{A}_1\cap \mathcal{H}=\emptyset$ and $c_1+c_2+\cdots c_n=0$\label{agama}.

\item [(iv)] Let $\mathcal{H}:=\{a_1, a_2,\ldots, a_n, b_1, b_2, \ldots, b_n\}$. Then the set $\mathcal{H}\setminus \mathcal{A}_1:=\{b_1, b_2, \ldots b_n\}$ is also complete in $\mathbb{Z}$ provided for each $i=1,\ldots n$, $b_i=ta_i$ for some fixed $t\in \mathbb{Z}$.

\item [(v)]Let $\mathcal{A}:=\{d_1, d_2, \ldots, d_n\}$ be complete in $\mathbb{Z}$ and suppose  $|2A|=\frac{n(n+1)}{2}$. Then the two fold sumset $2A$ is complete in $\mathbb{Z}$ provided  $(n+1)|(d_i+d_j)$ with $i\neq j$  for some $1\leq i,j \leq n$.

\item [(vi)]The set $q\cdot \mathcal{A}_1:=\{qa_1, qa_2,\ldots, qa_n\}$ is also complete in $\mathbb{Z}$.
\end{enumerate}
\end{theorem}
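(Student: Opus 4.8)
The plan is to reduce every part to a single observation: a set $\{x_1,\dots,x_m\}$ is complete in $\mathbb{Z}$ precisely when its sum $\sum_k x_k$ divides its product $\prod_k x_k$ (with the degenerate convention that a vanishing sum forces a vanishing product). Thus for each operation I would compute the sum and the product of the resulting set, express them through the quantities $S_1=\sum_i a_i$, $P_1=\prod_i a_i$, $S_2=\sum_j b_j$, $P_2=\prod_j b_j$, and then invoke the hypotheses $P_1=t_1 S_1$ and $P_2=t_2 S_2$ supplied by completeness of $\mathcal{A}_1,\mathcal{A}_2$.

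For the multiplicative parts this is immediate. In (vi), scaling by $q$ sends the sum to $qS_1$ and the product to $q^nP_1=q^{n-1}t_1\,(qS_1)$, so the new sum divides the new product; part (iv) is then the special case $q=t$, since $\{b_i\}=\{ta_i\}=t\cdot\mathcal{A}_1$. In (i) the prodset (assuming its $n^2$ entries are distinct) has sum $S_1S_2$ and product $P_1^nP_2^n=(t_1t_2)^nS_1^nS_2^n$, which is plainly a multiple of $S_1S_2$. In (iii), disjointness gives union product $P_1\prod_k c_k$ and union sum $S_1+\sum_k c_k=S_1$ because $\sum_k c_k=0$; since $P_1=t_1S_1$, the sum $S_1$ divides the product, so $\mathcal{A}_1\cup\mathcal{H}$ is complete.

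Part (ii) needs the one extra idea of summing the pointwise relation. Assuming $\mathcal{A}_1,\mathcal{A}_2$ disjoint, the union has product $P_1P_2$ and sum $S_1+S_2$. Summing $a_ib_j=t(a_i+b_j)$ over all $1\le i,j\le n$ collapses the left side to $S_1S_2$ and the right side to $tn(S_1+S_2)$, yielding $S_1S_2=tn(S_1+S_2)$. Then $P_1P_2=t_1t_2S_1S_2=t_1t_2tn\,(S_1+S_2)$, exhibiting $S_1+S_2$ as a divisor of $P_1P_2$, so the union is complete.

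The main obstacle is part (v), where I must first read the hypothesis $|2A|=\tfrac{n(n+1)}{2}$ as a Sidon-type condition: this is exactly the maximal possible cardinality of $A+A$, so all sums $d_i+d_j$ with $i\le j$ are distinct and the elements of $2A$ are indexed bijectively by such pairs. I would then compute over these representatives, separating the diagonal from the off-diagonal. The sum collapses as $\sum_{i\le j}(d_i+d_j)=\sum_{i<j}(d_i+d_j)+\sum_i 2d_i=(n-1)S+2S=(n+1)S$, where $S=\sum_i d_i$, while the product factors as $\prod_{i\le j}(d_i+d_j)=2^n\big(\prod_i d_i\big)\prod_{i<j}(d_i+d_j)=2^nP\prod_{i<j}(d_i+d_j)$. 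Using completeness of $A$ in the form $P=tS$, the quotient product/sum equals $2^n t\prod_{i<j}(d_i+d_j)/(n+1)$, so everything reduces to checking $(n+1)\mid 2^n t\prod_{i<j}(d_i+d_j)$. This is where the final hypothesis enters: since $(n+1)\mid(d_i+d_j)$ for some pair $i\neq j$, that very factor appears in $\prod_{i<j}(d_i+d_j)$, forcing the divisibility and hence the completeness of $2A$. The delicate points I expect to handle with care are justifying the distinctness and bijective indexing used in the sum and product (both here and in (i)) and tracking the diagonal terms $2d_i$ separately from the off-diagonal factors.
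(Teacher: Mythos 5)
Your proposal is correct and follows essentially the same route as the paper: in every part you compute the sum and product of the resulting set, substitute $P_1=t_1S_1$, $P_2=t_2S_2$, and in (ii) and (v) sum the pointwise relation and use the divisibility hypothesis exactly as the paper does. The only difference is that you are somewhat more careful than the paper in flagging the implicit distinctness/disjointness assumptions needed for the sum and product of the prodset, the union, and the sumset to be what the computation requires.
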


\begin{proof}
$(i)$ Suppose $\mathcal{A}_1:=\{a_1,a_2,\ldots, a_n\}$ and $\mathcal{A}_2:=\{b_1, b_2, \ldots, b_n\}$. Then the prodset $\mathcal{A}_1\cdot \mathcal{A}_2:=\{a_1b_1, \ldots, a_1b_n,$ $ a_2b_1, a_2b_2, \ldots, a_2b_n, \ldots, a_nb_1, a_nb_2, \ldots, a_nb_n\}$. Now, it follows that\begin{align}\prod \limits_{i,j=1}^{n}a_ib_j&=(b_1b_2\cdots b_n)(a_1a_2\cdots a_n)\bigg((b_1b_2\cdots b_n)^{n-1}\prod \limits_{i=1}^{n}a_i^{n-1}\bigg).\nonumber 
\end{align}Since $\mathcal{A}_1$ and $\mathcal{A}_2$ are complete in $\mathbb{Z}$, it follows that $\prod \limits_{i,j=1}^{n}a_ib_j=K\bigg(\sum \limits_{i=1}^{n}a_i\bigg)\bigg(\sum \limits_{j=1}^{n}b_j\bigg)$ $\bigg((b_1b_2\cdots b_n)^{n-1}\prod \limits_{i=1}^{n}a_i^{n-1}\bigg)$, for some $K\in \mathbb{Z}$. Thus we can write \begin{align}\prod \limits_{i,j=1}^{n}a_ib_j=R\sum \limits_{i,j=1}^{n}a_ib_j,\nonumber
\end{align}where it is easily seen that $R\in \mathbb{Z}$. Hence the conclusion follows immediately.
\bigskip

$(ii)$ Suppose $\mathcal{A}_1:=\{a_1,a_2, \ldots, a_n\}$ and $\mathcal{A}_2:=\{b_1,b_2,\ldots, b_n\}$ be complete in $\mathbb{Z}$. Now the union $\mathcal{A}_1\cup \mathcal{A}_2=\{a_1, a_2, \ldots, a_n,b_1, b_2, \ldots, b_n\}$. Let us take the product of all the elements of the set, given by $(a_1a_2\cdots a_n)(b_1b_2\cdots b_n)$. Bearing in mind each of the sets is complete in $\mathbb{Z}$, it follows that $(a_1a_2\cdots a_n)(b_1b_2\cdots b_n)=R\bigg(\sum\limits_{i=1}^{n}a_i\bigg)\bigg(\sum \limits_{j=1}^{n}b_j\bigg)=R\sum \limits_{i,j=1}^{n}a_ib_j$. It follows from the hypothesis that $\sum \limits_{i,j=1}^{n}a_ib_j=S\sum \limits_{i=1}^{n}a_i+S\sum \limits_{j=1}^{n}b_j$, where $S\in \mathbb{Z}$. Hence it is easy to see that the conclusion follows immediately.
\bigskip

$(iii)$ Suppose $\mathcal{A}_1:=\{a_1, a_2, \ldots, a_n\}$ be a complete set in $\mathbb{Z}$ and let $\mathcal{H}:=\{c_1, c_2, \ldots, c_n\}$ such that $c_1+c_2+\cdots +c_n=0$. Assume the set $\mathcal{A}_1\cup \mathcal{H}:=\{a_1,a_2, \ldots, a_n, c_1, c_2, \ldots, c_n\}$ so that $\mathcal{A}_1\cap \mathcal{H}=\emptyset$. Since $\mathcal{A}_1$ is complete in $\mathbb{Z}$, we see that $a_1\cdot a_2 \cdots a_nc_1\cdot c_2\cdots c_n=R(c_1c_2\cdots c_n)(a_1+a_2\cdots a_n+c_1+c_2+\cdots +c_n)=R_1(a_1+a_2\cdots a_n+c_1+c_2+\cdots +c_n)$ and it follows that $\mathcal{A}_1\cup \mathcal{H}$ is also complete in $\mathbb{Z}$.
\bigskip

$(iv)$ Consider the set $\mathcal{H}\setminus \mathcal{A}_1:=\{b_1, b_2, \ldots, b_n\}$. Using the hypothesis and the fact that $\mathcal{A}_1$ is complete in $\mathbb{Z}$, it follows that $b_1\cdot b_2\cdots b_n=t^n(a_1\cdot a_2\cdots a_n)=t^nk(a_1+a_2+\cdots +a_n)$. Hence it follows that $\mathcal{H}\setminus \mathcal{A}_1$ is also complete in $\mathbb{Z}$.
\bigskip

$(v)$ Suppose $\mathcal{A}:=\{d_1, d_2, \ldots, d_n\}$ is complete in $\mathbb{Z}$ and let $|2\mathcal{A}|=\frac{n(n+1)}{2}$. Then the two fold sumset $2\mathcal{A}:=\{d_1+d_1, d_1+d_2, \ldots, d_1+d_n, d_2+d_2, d_2+d_3, \ldots, d_2+d_n, \ldots, d_{n-1}+d_{n-1}, d_{n-1}+d_{n}, d_{n}+d_{n}\}$. The product of the elements in $2\mathcal{A}$ is given by $2^{n}(d_{1}\cdot d_2\cdots d_{n})\prod \limits_{\substack{i,j=1\\i\neq j}}^{n}(d_i+d_j)$. Since $\mathcal{A}$ is complete in $\mathbb{Z}$ and $n+1|(d_i+d_j)$ for some $1\leq i,j \leq n$ with $i\neq j$, we have that $2^{n}(d_{1}\cdot d_2\cdots d_{n})\prod \limits_{\substack{i,j=1\\i\neq j}}^{n}(d_i+d_j)=R(n+1)\sum \limits_{i=1}^{n}d_i$ with $R\in \mathbb{Z}$, and we see that the result follows immediately.
\bigskip

$(vi)$ The result follows immediately, since $\mathcal{A}_1$ is complete in $\mathbb{Z}$.
\end{proof}

\begin{remark}
Property $(iii)$ in Theorem \ref{agama} is very important and useful for construction purposes. It tells us we only need to find a complete set of small size as we seek for a large complete set, since a larger complete set can be obtained by adding  well-balanced elements of any size we wish into the set. Again property $(v)$ in Theorem \ref{agama} informs us that if a set is complete, then the two fold sumset has a very high chance of being complete provided the size is not too small. Finally property $(ii)$ tells us that if the product of any two elements from any two complete sets, not necessarily distinct, can be controlled additively then their union will certainly be complete in $\mathbb{Z}$.
\end{remark}

\begin{theorem}
Let $\mathcal{G}$ be a finite set of integers. Then the normal form of $\mathcal{G}$, denoted $\mathcal{G}^{(N)}$ is always complete in $\mathbb{Z}$.
\end{theorem}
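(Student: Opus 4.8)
The plan is to build the entire argument on the one feature that every normal form shares: it is anchored at zero. By the definition recalled in the introduction, $\mathcal{G}^{(N)}=\{a_0',a_1',\ldots,a_n'\}$ with $a_i'=\frac{a_i-a_0}{d(\mathcal{G})}$, and in particular $a_0'=\frac{a_0-a_0}{d(\mathcal{G})}=0$. So $0$ is always an element of $\mathcal{G}^{(N)}$, no matter what $\mathcal{G}$ was.

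First I would compute the product of the elements of $\mathcal{G}^{(N)}$. Because the factor $a_0'=0$ occurs among them, the product collapses to $\prod_{i=0}^{n}a_i'=0$. This single observation is the whole mechanism of the proof: the presence of $0$ annihilates the left-hand side of the completeness relation independently of the remaining entries $a_1',\ldots,a_n'$.

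Then I would verify completeness straight from the definition. We must exhibit some $b\in\mathbb{Z}$ with $\prod_{i=0}^{n}a_i'=b\sum_{i=0}^{n}a_i'$. Choosing $b=0$ turns the right-hand side into $0$ as well, so the required identity reads $0=0$ and holds trivially. Hence $\mathcal{G}^{(N)}$ is complete in $\mathbb{Z}$, with $b=0$ serving as the witness.

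I do not anticipate any genuine obstacle here, since the conclusion is forced by the structural fact that $0\in\mathcal{G}^{(N)}$; indeed the same reasoning shows that \emph{any} finite set of integers containing $0$ is automatically complete in $\mathbb{Z}$. The only point I would be careful to address is the degenerate singleton case $\mathcal{G}=\{a_0\}$, where $\mathcal{G}^{(N)}=\{0\}$ and both the product and the sum vanish; there too $b=0$ satisfies $0=b\cdot 0$, so completeness persists.
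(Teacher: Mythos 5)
Your proof is correct and follows the same route the paper intends: the paper's own proof merely restates the definition of the normal form and declares the conclusion immediate, and the reason it is immediate is precisely the one you make explicit, namely that $a_0'=0$ forces the product to vanish so that $b=0$ witnesses completeness. You have in fact supplied the justification the paper leaves unstated, including the observation that any finite set containing $0$ is complete.
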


\begin{proof}
Consider the set $\mathcal{G}=\{a_0, a_1, \ldots, a_n\}$ of integers, where $a_0<a_1<\cdots <a_n$. The normal form of $\mathcal{G}$ is given by $\mathcal{G}^{(N)}:=\{a_0', a_1', \ldots, a_n'\}$, where $a_i'=\frac{a_i-a_0}{d(\mathcal{G})}$ and where $d(\mathcal{G})=(a_0-a_0, a_1-a_0, \ldots, a_n-a_0)$ with $0=a_0'<a_1'<\ldots <a_n'$. It follows immediately that $\mathcal{G}^{N}$ is complete, thereby ending the proof.
\end{proof}
 
\begin{remark}
We have seen in Theorem \ref{agama} in order to construct a large complete set we only need to first find a small complete set and then add terms of a well-balanced finite sequence into the set, thereby obtaining a complete set. This process requires adding negative integers. We can avoid the negative integers by examining the following result encapsulated in the following theorem. 
\end{remark}

\begin{theorem}\label{Agama1}
Let $\mathcal{F}$ be a finite set of integers. If $\mathcal{F}$ is a  homogenous arithmetic progression of odd length, then $\mathcal{F}$ is complete in $\mathbb{Z}$.
\end{theorem}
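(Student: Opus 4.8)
The plan is to reduce completeness to a single divisibility statement and then settle it using the parity of the length. First I would write the homogeneous arithmetic progression as $\mathcal{F}=\{q,2q,\ldots,nq\}$ for some fixed integer $q$ and odd $n$, and compute the two quantities appearing in the definition of completeness: the product $\prod_{i=1}^{n}(iq)=q^{n}\,n!$ and the sum $\sum_{i=1}^{n}(iq)=q\,\frac{n(n+1)}{2}$. By definition $\mathcal{F}$ is complete in $\mathbb{Z}$ exactly when there is an integer $b$ with $q^{n}n!=b\cdot q\,\frac{n(n+1)}{2}$, equivalently when
\[
b=\frac{2q^{n-1}(n-1)!}{n+1}
\]
is an integer. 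Thus everything comes down to showing that $n+1$ divides $2q^{n-1}(n-1)!$, and since we have no control over $q$ it suffices to show that $n+1$ divides $2(n-1)!$.

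Next I would exploit that $n$ is odd. Writing $n+1=2m$ with $m=\frac{n+1}{2}\in\mathbb{Z}$, the candidate value simplifies to $b=\frac{q^{n-1}(n-1)!}{m}$, so it is enough to prove $m\mid(n-1)!$. This is where the hypothesis does the real work: for $n\geq 3$ the inequality $\frac{n+1}{2}\leq n-1$ holds, so $m$ is one of the integers $2,3,\ldots,n-1$ and therefore occurs as a factor in the product $(n-1)!=1\cdot 2\cdots(n-1)$, giving $m\mid(n-1)!$. The remaining case $n=1$ is immediate, since then $\mathcal{F}=\{q\}$ and the product equals the sum. In every case $m\mid(n-1)!$, so $b\in\mathbb{Z}$ and $\mathcal{F}$ is complete.

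I expect the only delicate point to be the divisibility $m\mid(n-1)!$, and in particular the inequality $\frac{n+1}{2}\leq n-1$ that places $m$ inside the range of factors of $(n-1)!$; it is precisely here that odd length is used. For even $n$ one would instead need $n+1$, now odd and possibly prime, to divide $(n-1)!$, which can fail, for example when $n=4$ and $n+1=5$. Everything else is routine algebraic manipulation of the product and the sum.
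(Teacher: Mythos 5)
Your proof is correct and follows essentially the same route as the paper: compute the product $q^{n}n!$ and the sum $q\,\frac{n(n+1)}{2}$, and show the required quotient is an integer by exploiting that $n+1$ is even. If anything, your argument is tidier — you explicitly verify the key divisibility (that $m=\frac{n+1}{2}$ lies among the factors of $(n-1)!$ once $n\geq 3$, with $n=1$ handled separately), whereas the paper merely asserts the analogous fact that $(n+1)\mid 4(n-2)!$ without justification.
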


\begin{proof}
 Let us consider the homogenous arithmetic progression $\mathcal{F}=\{d, 2d, \ldots ,(n-1)d, nd\}$. Clearly we see that $(d\cdot 2d\cdots nd)=d^{n}n!$. We observe that \begin{align}d^{n}n!=\frac{n(n+1)}{2}d^{n}\bigg(2(n-2)!-4\frac{(n-2)!}{n+1}\bigg).\nonumber
\end{align}Suppose $\mathcal{F}$ is of odd length, then it is easy to see that $(n+1)|4(n-2)!$. Thus, $\bigg(2(n-2)!-4\frac{(n-2)!}{n+1}\bigg)\in \mathbb{Z}$, and it follows that $\mathcal{F}$ is complete in $\mathbb{Z}$, as required. 
\end{proof}

\begin{remark}
This result , albeit easy to state, is very useful for the theory. It helps us to construct complete sets of integers of any length we wish. More than this, it relates two important concepts of sets of integers, one of which is  widely studied in the whole of mathematics and has led to massive developments, arithmetic progression. It is also worth pointing out that the converse of Theorem \ref{Agama1} is not true, since there are complete sets that are not homogenous arithmetic progressions.
\end{remark}

\begin{example}
Theorem \ref{Agama1} informs us that the sets $\{3,6,9\}$, $\{23,46, 69, 92, 115\}$, $\{4,8, 12, 16, 20\}$, $\{7,14, 21,28,35\}$, $\{101,202,303, 404, 505\}$, $\{11, 22, 33,\}$, $\{9,18,27,$ $36,45,54,63,72,81\}$ are all complete in $\mathbb{Z}$.
\end{example}

\begin{corollary}
Every interval $[1,N]$ of positive integers of odd length $N$ is complete in $\mathbb{Z}$.
\end{corollary}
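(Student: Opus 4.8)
The plan is to recognize the interval $[1,N]$ as a special instance of a structure whose completeness has already been settled, namely a homogenous arithmetic progression. First I would observe that $[1,N]=\{1,2,\ldots,N\}$ is precisely the homogenous arithmetic progression $\{q,2q,\ldots,nq\}$ in the case $q=1$ with $n=N$ terms. Since the length of the interval $[1,N]$ is the number of integers it contains, the hypothesis ``odd length $N$'' means exactly that the number of terms $N$ of this progression is odd.

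With this identification made, the corollary follows at once from Theorem \ref{Agama1}, which asserts that every homogenous arithmetic progression of odd length is complete in $\mathbb{Z}$. As $[1,N]$ is such a progression (with common difference $q=1$) whenever $N$ is odd, it is complete in $\mathbb{Z}$, and nothing further needs to be proved. I therefore do not expect any genuine obstacle: the whole content of the statement is the substitution $q=1$ into the preceding theorem, so the single essential step is the observation that $[1,N]$ is a homogenous arithmetic progression.

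As a sanity check one may instead verify the defining relation directly. The product of the elements of $[1,N]$ equals $N!$ and their sum equals $\tfrac{N(N+1)}{2}$, so completeness amounts to producing an integer $b$ with $N!=b\cdot\tfrac{N(N+1)}{2}$, equivalently to the divisibility $(N+1)\mid 2(N-1)!$. This is exactly the arithmetic fact encoded in the proof of Theorem \ref{Agama1}; specializing the identity $d^{n}n!=\tfrac{n(n+1)}{2}d^{n}\bigl(2(n-2)!-4\tfrac{(n-2)!}{n+1}\bigr)$ to $d=1$ makes the required integrality transparent for odd $N$. Either route gives the same conclusion, but the cleanest write-up is simply to invoke Theorem \ref{Agama1} with $q=1$.
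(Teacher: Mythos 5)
Your proposal is correct and follows exactly the paper's own route: the paper likewise observes that $[1,N]$ is a homogenous arithmetic progression (with common difference $1$) and invokes Theorem \ref{Agama1} directly. The additional sanity check reducing completeness to the divisibility $(N+1)\mid 2(N-1)!$ is consistent with the proof of that theorem but is not needed.
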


\begin{proof}
The result follows immediately from Theorem \ref{Agama1}, since all integers in the interval $[1,N]$ form a homogenous arithmetic progression.
\end{proof}

\begin{conjecture}
Let $\mathcal{H}:=\{p_1,p_2, \ldots, p_n\}$ be a set of odd length of the first $n$ odd primes, with $3=p_1<p_2\cdots <p_n$. Then either $\mathcal{H}$ is a complete set or \begin{align}\sum \limits_{i=1}^{n}p_i=L,\nonumber
\end{align}is prime or $\omega(L)=2$, where $\omega(L)=\sum \limits_{p|L}1$.
\end{conjecture}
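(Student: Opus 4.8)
The natural plan is to recast completeness as a single divisibility and then attack it by contraposition. Writing $L=\sum_{i=1}^{n}p_i$ and $P=\prod_{i=1}^{n}p_i$, the definition of completeness says exactly that $\mathcal{H}$ is complete if and only if $L \mid P$. Since $P$ is a product of distinct primes it is squarefree, so $L \mid P$ holds precisely when $L$ is squarefree and every prime dividing $L$ is one of $p_1,\dots,p_n$; that is, when $L$ is both squarefree and $p_n$-smooth. Because $n$ is odd, $L$ is a sum of an odd number of odd integers and is therefore odd, so divisibility by $2$ is never an issue and the only ways completeness can fail are (A) $L$ has a prime factor exceeding $p_n$, or (B) $L$ has a repeated prime factor. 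The goal then reduces to the contrapositive: show that whenever $\omega(L)\ge 3$, neither (A) nor (B) occurs.

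The first genuinely usable step is to control the size of $L$. From $p_k\sim k\ln k$ one obtains $L\sim\tfrac12 n^{2}\ln n$ and $p_n\sim n\ln n$, so that $L/p_n\sim n/2$ while $\sqrt{L}\sim n\sqrt{(\ln n)/2}$. In particular $p_n>\sqrt{L}$ for all large $n$, so in case (A) any offending prime $q>p_n$ automatically exceeds $\sqrt{L}$, hence divides $L$ exactly once, and its cofactor $m=L/q$ is bounded by $L/p_n\sim n/2$. This isolates case (A) as the statement ``$L=q\,m$ with $q>p_n$ prime and $m$ small,'' where one would hope to force $m\in\{1,\text{prime}\}$, recovering $L$ prime or $\omega(L)=2$.

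The hard part, and in fact the fatal one, is that these estimates govern the size of $L=\sum_i p_i$ but give no access whatsoever to its factorization: nothing forces the cofactor $m$ in (A) to be $1$ or prime once $n/2$ is large enough to admit a composite, and nothing forces $L$ to be squarefree in (B). Any serious attempt must therefore begin by tabulating small $n$, and doing so shows that the three alternatives genuinely over-reach. Already at $n=23$ the first $23$ odd primes sum to $L=961=31^{2}$, a prime power with $\omega(L)=1$ that is neither prime nor complete, since the squarefree $P$ cannot be divisible by $31^{2}$. Worse, the decisive $\omega(L)\ge 3$ regime fails directly: at $n=37$ one computes $L=2745=3^{2}\cdot 5\cdot 61$, so $\omega(L)=3$ yet the repeated factor $3^{2}$ prevents $L\mid P$, so $\mathcal{H}$ is not complete although all three alternatives are false. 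Thus completeness can fail together with all stated alternatives, and even the weakening ``$\omega(L)\le 2$ or complete'' collapses at $n=37$. I therefore expect the true obstacle to be exactly this: any proof of a statement of this kind must simultaneously control the squarefree part and the smoothness of $\sum_i p_i$, for which no tool is apparent, and the conjecture as phrased requires substantial revision.
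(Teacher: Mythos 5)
The statement you were given is a \emph{conjecture}; the paper supplies no proof of it, so there is nothing of the author's to compare your argument against. What you have produced is not a proof but a refutation, and as far as I can check it is correct. Your reduction is the right one: since $\mathcal{H}$ consists of distinct primes, $P=\prod_i p_i$ is squarefree, and completeness is exactly the divisibility $L\mid P$ with $L=\sum_i p_i$, which forces $L$ to be squarefree and supported on $\{p_1,\dots,p_n\}$. Your two numerical counterexamples check out. For $n=23$ (odd, as required) the first $23$ odd primes are $3,5,\dots,89$ and their sum is $963-2=961=31^{2}$, so $L$ is not prime, $\omega(L)=1\neq 2$, and $31^{2}\nmid P$ rules out completeness: all three alternatives fail. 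For $n=37$ the first $37$ odd primes sum to $2747-2=2745=3^{2}\cdot 5\cdot 61$, so $\omega(L)=3$, $L$ is not prime, and the factor $3^{2}$ again kills $L\mid P$. Hence the conjecture is false as stated, and even the weaker disjunction ``complete or $\omega(L)\le 2$'' fails at $n=37$.

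Two small remarks. First, the asymptotic discussion ($L\sim\tfrac12 n^{2}\ln n$, $p_n>\sqrt{L}$ eventually) is correct but does no work in the refutation; the explicit counterexamples carry the whole argument, and you could state them first. Second, since your conclusion is that the statement needs revision rather than proof, it would strengthen the write-up to say precisely which repaired statement you believe survives (for instance, whether failures of squarefreeness of $L$ are the only obstruction for large $n$, or whether large prime factors of $L$ exceeding $p_n$ also occur for odd $n$); at present you only gesture at this.
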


\begin{conjecture}
Every finite set $\mathcal{T}\subset \mathbb{N}$ can be completed in $\mathbb{N}$.
\end{conjecture}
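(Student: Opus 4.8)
The plan is to first reformulate completeness into a divisibility condition and then give a fully explicit construction that adjoins at most two positive integers to $\mathcal{T}$. The key preliminary observation is that a finite set $\mathcal{S}$ of positive integers is complete in $\mathbb{N}$ precisely when $\sigma(\mathcal{S}):=\sum_{a\in\mathcal{S}}a$ divides $\pi(\mathcal{S}):=\prod_{a\in\mathcal{S}}a$. Indeed, the defining equation $\pi(\mathcal{S})=b\,\sigma(\mathcal{S})$ has a solution $b\in\mathbb{N}$ if and only if $\sigma(\mathcal{S})\mid\pi(\mathcal{S})$, since $\sigma(\mathcal{S})>0$ for a nonempty subset of $\mathbb{N}$. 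Thus the whole problem reduces to the following: given a finite nonempty $\mathcal{T}\subset\mathbb{N}$, enlarge it to a finite $\mathcal{S}\subset\mathbb{N}$ with $\mathcal{T}\subseteq\mathcal{S}$ whose sum divides its product.

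The core idea is that adjoining a single element $x$ sends the sum to $S+x$ and the product to $Px$, where $S=\sigma(\mathcal{T})$ and $P=\pi(\mathcal{T})$. A one-line congruence, $Px=P(S+x)-PS$, shows that $(S+x)\mid Px$ if and only if $(S+x)\mid PS$; the most economical choice is to force $S+x=PS$, i.e. $x=S(P-1)$. With this choice the new sum $SP$ divides the new product $PS(P-1)$ with quotient $b=P-1$, so the enlarged set is complete. This requires only $P\ge 2$ to guarantee $x=S(P-1)\ge S>0$.

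The step I expect to be the main (and only) obstacle is staying inside $\mathbb{N}$ while keeping all elements distinct, which is exactly the difficulty flagged in the remark preceding the statement: the balancing construction of Theorem \ref{agama}(iii) is unavailable because it introduces negative integers. The bare single-element trick can fail in degenerate cases, for instance $\mathcal{T}=\{1\}$ (where $P=1$) or $\mathcal{T}=\{2\}$ (where $x=S(P-1)=2$ collides with the existing element). To dispose of all cases uniformly, I would first adjoin a fresh integer $c>\max(\mathcal{T})$ with $c\ge 2$, forming $\mathcal{T}_1=\mathcal{T}\cup\{c\}$, and set $S_1=\sigma(\mathcal{T}_1)$, $P_1=\pi(\mathcal{T}_1)\ge 2$. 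Then take $x=S_1(P_1-1)$ and $\mathcal{S}=\mathcal{T}_1\cup\{x\}$. Because $P_1\ge 2$ we get $x\ge S_1>c=\max(\mathcal{T}_1)$, so $x$ is automatically positive and distinct from every element already present, resolving the distinctness issue with no case analysis.

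All that then remains is the routine verification that $\sigma(\mathcal{S})=S_1P_1$ and $\pi(\mathcal{S})=S_1P_1(P_1-1)$, whence $\sigma(\mathcal{S})\mid\pi(\mathcal{S})$ with $b=P_1-1\in\mathbb{N}$; by the reformulation of the first paragraph this says precisely that $\mathcal{S}$ is complete in $\mathbb{N}$. Since $\mathcal{S}$ is finite and contains $\mathcal{T}$, this completes $\mathcal{T}$ as required. I do not anticipate any genuine difficulty beyond this bookkeeping: the substantive content is entirely contained in the divisibility reformulation and the elementary identity $S+x=PS$, while adjoining the auxiliary element $c$ is a cheap device to force positivity and distinctness.
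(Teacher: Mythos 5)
Your argument is correct, and the most important thing to say about it is that the paper offers no proof of this statement at all: it is posed as a conjecture, and the phrase ``completed in $\mathbb{N}$'' is never formally defined there, though your reading --- extend $\mathcal{T}$ to a finite superset of positive integers that is complete in $\mathbb{N}$ --- is clearly the intended one in view of the surrounding remarks about building complete sets by adjoining elements while avoiding negatives. Your reformulation of completeness in $\mathbb{N}$ as the divisibility $\sigma(\mathcal{S})\mid\pi(\mathcal{S})$ is exactly right (both quantities are positive, so the witness $b$ is automatically a positive integer), the identity $Px=P(S+x)-PS$ correctly reduces the problem to arranging $(S+x)\mid PS$, and the choice $x=S(P-1)$ achieves this with quotient $b=P-1$. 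Adjoining the auxiliary element $c>\max(\mathcal{T})$ with $c\ge 2$ does double duty as you intend: it forces $P_1\ge 2$, so that $b=P_1-1\ge 1$ lies in $\mathbb{N}$, and it gives $x=S_1(P_1-1)\ge S_1>c=\max(\mathcal{T}_1)$, so that $x$ is positive and distinct from everything already present. The only loose end is the empty set, for which $\max(\mathcal{T})$ is undefined and the choice $c=2$ would produce $x=c(c-1)=c$; you implicitly exclude this case by taking $\mathcal{T}$ nonempty, which is harmless (and $c=3$ handles even that degenerate case). So you have in fact settled the conjecture affirmatively, and in a quantitatively strong form: every finite nonempty $\mathcal{T}\subset\mathbb{N}$ becomes complete after adjoining at most two explicitly given elements. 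This goes well beyond anything the paper establishes, whose only constructive tools (Theorem 3.2(iii) and the homogeneous-progression criterion of Theorem 3.5) either require inserting negative integers or say nothing about extending an arbitrary prescribed set.
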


\section{THE NUMBER OF COMPLETE SETS IN $\mathbb{Z}$}
In this section we turn our attention to counting the number complete subsets that can be formed from any finite set of integers. We begin addressing the problem from a narrower perspective, which is to say we seek the maximum number of complete subsets of the set $\{1,2, \ldots, N\}$ of integers. We obtain a lower bound in the following results.

\begin{lemma}\label{harmonic}
The estimate \begin{align}\sum \limits_{n\leq x}\frac{1}{n}=\log x+O(1),\nonumber
\end{align}is valid.
\end{lemma}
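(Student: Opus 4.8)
The plan is to establish this classical estimate by the standard method of comparing the sum with the integral of the monotone function $t \mapsto 1/t$. Writing $N := \lfloor x \rfloor$, I first note that no integer lies strictly between $N$ and $x$, so $\sum_{n \leq x} \frac{1}{n} = \sum_{n=1}^{N} \frac{1}{n}$. It therefore suffices to estimate this finite sum against $\log N$ and then pass from $\log N$ back to $\log x$ at the very end.

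The heart of the argument exploits that $1/t$ is positive and strictly decreasing on $[1,\infty)$. Since $1/n \leq 1/t$ for $t \in [n-1,n]$ and $1/n \geq 1/t$ for $t \in [n,n+1]$, integrating over each unit interval yields, for every integer $n \geq 2$,
\begin{align}
\int_{n}^{n+1}\frac{dt}{t} \leq \frac{1}{n} \leq \int_{n-1}^{n}\frac{dt}{t}.\nonumber
\end{align}
Next I would sum these inequalities so that the integrals telescope. Summing the right-hand bound over $2 \leq n \leq N$ gives $\sum_{n=2}^{N}\frac{1}{n} \leq \int_{1}^{N}\frac{dt}{t} = \log N$, whence the upper estimate $\sum_{n=1}^{N}\frac{1}{n} \leq 1 + \log N$, the isolated $1$ being the $n=1$ term. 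Summing the left-hand bound over $1 \leq n \leq N$ gives $\sum_{n=1}^{N}\frac{1}{n} \geq \int_{1}^{N+1}\frac{dt}{t} = \log(N+1) \geq \log N$. Combining the two produces $\log N \leq \sum_{n=1}^{N}\frac{1}{n} \leq \log N + 1$, so the sum differs from $\log N$ by a quantity lying in $[0,1]$, which is $O(1)$ uniformly in $N$.

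To finish, I would pass from $N$ to $x$ by noting $0 \leq \log x - \log N = \log(x/N) < \log\!\left(\frac{N+1}{N}\right) \leq \log 2$, which is again $O(1)$. Absorbing this bounded discrepancy into the error term yields $\sum_{n \leq x}\frac{1}{n} = \log x + O(1)$, as claimed.

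Regarding difficulty, there is no genuine obstacle here: the statement is elementary and the integral comparison is entirely routine. The only points demanding care are getting the \emph{direction} of the monotonicity inequalities correct so that the integrals telescope to $\log N$, and checking that the resulting error is bounded \emph{uniformly} rather than merely for each fixed $N$. It is worth remarking that the $O(1)$ cannot be improved to $o(1)$, since the difference $\sum_{n \leq N}\frac{1}{n} - \log N$ converges to the Euler--Mascheroni constant $\gamma$; thus a bounded error term is best possible in form, and the crude constants above are entirely sufficient for the counting application that follows.
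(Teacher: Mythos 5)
Your proof is correct, but it takes a different route from the paper only in the sense that the paper does not actually prove the lemma at all: it simply cites Theorem 6.9 of Nathanson's book, which in fact gives the sharper asymptotic $\sum_{n\leq x}\frac{1}{n}=\log x+\gamma+O(1/x)$, of which the stated $\log x+O(1)$ is an immediate weakening. Your integral-comparison argument is the standard self-contained proof: the unit-interval inequalities $\int_{n}^{n+1}\frac{dt}{t}\leq \frac{1}{n}\leq \int_{n-1}^{n}\frac{dt}{t}$ are stated with the monotonicity in the correct direction, the telescoping gives the two-sided bound $\log N\leq \sum_{n=1}^{N}\frac{1}{n}\leq \log N+1$ with an error uniformly in $[0,1]$, and the passage from $\log N$ to $\log x$ is handled correctly since $\log(x/N)<\log 2$. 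The only minor caveat is the implicit assumption $x\geq 1$ (for $x<1$ the empty sum is $0$ while $\log x\to-\infty$), which is the universal convention for such estimates and harmless for the application in Theorem 4.1. What your approach buys is that the paper becomes self-contained at the cost of a slightly weaker (but fully sufficient) error term; what the citation buys is the sharper constant $\gamma$ and the $O(1/x)$ decay, neither of which is needed for the lower bound $\mathcal{C}(N)\gg N\log N$.
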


\begin{proof}
See Theorem 6.9 in the book of Nathanson \cite{nathanson2000elementary}.
\end{proof}
\bigskip

\begin{theorem}\label{counting1}
Let $\mathcal{C}(N)$ denotes the total number of complete subsets of the set $\mathcal{H}=\{1,2,\ldots, N\}$ of integers, then \begin{align}\mathcal{C}(N)\gg N\log N.\nonumber
\end{align}
\end{theorem}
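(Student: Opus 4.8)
The plan is to exhibit an explicit, easily enumerated family of complete subsets of $\mathcal{H}=\{1,2,\ldots,N\}$ and to show that this family alone already has size $\gg N\log N$. The natural source of such sets is Theorem \ref{Agama1}: every homogeneous arithmetic progression of odd length is complete in $\mathbb{Z}$. Accordingly, for each odd integer $n\geq 3$ and each positive integer $d$ with $nd\leq N$, the set $\{d,2d,\ldots,nd\}$ is a genuine $n$-element subset of $\mathcal{H}$ (its elements are distinct integers in $[1,N]$), and it is complete by Theorem \ref{Agama1}. Thus every admissible pair $(d,n)$ contributes one complete subset, and restricting to $n\geq 3$ merely avoids the degenerate case $n=1$ in that theorem's formula without affecting the asymptotics.

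First I would verify that distinct pairs give distinct subsets, so that no overcounting occurs. If $\{d,2d,\ldots,nd\}=\{d',2d',\ldots,n'd'\}$, then comparing the smallest elements forces $d=d'$ and comparing the largest forces $nd=n'd'$, hence $n=n'$. Granting this injectivity, the number of complete subsets of $\mathcal{H}$ is bounded below by the number of admissible pairs, giving
\begin{align}
\mathcal{C}(N)\geq \sum_{\substack{3\leq n\leq N \\ n\ \mathrm{odd}}}\left\lfloor \frac{N}{n}\right\rfloor \geq N\sum_{\substack{3\leq n\leq N \\ n\ \mathrm{odd}}}\frac{1}{n}-N,\nonumber
\end{align}
where I have used $\lfloor N/n\rfloor\geq N/n-1$ together with the crude count $\#\{n\leq N:n\ \mathrm{odd}\}\leq N$. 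This reduces the problem to estimating the restricted harmonic sum over odd $n$.

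The key analytic step is to evaluate that restricted sum by splitting the full harmonic sum into its odd and even parts and applying Lemma \ref{harmonic}. Writing the even part as $\sum_{m\leq N/2}\frac{1}{2m}=\frac12\sum_{m\leq N/2}\frac{1}{m}$ and invoking Lemma \ref{harmonic} twice yields
\begin{align}
\sum_{\substack{n\leq N \\ n\ \mathrm{odd}}}\frac{1}{n}=\sum_{n\leq N}\frac{1}{n}-\frac{1}{2}\sum_{m\leq N/2}\frac{1}{m}=\log N-\frac{1}{2}\log N+O(1)=\frac{1}{2}\log N+O(1).\nonumber
\end{align}
Removing the single term $n=1$ changes this only by $O(1)$. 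Substituting back and absorbing the floor error into an $O(N)$ term then gives $\mathcal{C}(N)\geq \frac12 N\log N+O(N)\gg N\log N$, as claimed.

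I do not expect a serious obstacle, since the argument is a counting estimate resting entirely on Theorem \ref{Agama1}. The one point demanding genuine care is the harmonic-sum bookkeeping: the restriction to odd $n$ is forced by Theorem \ref{Agama1}, and one must confirm that this restriction halves the logarithm rather than destroying it, and that the $O(N)$ losses coming from the floor function and from the count of odd $n\leq N$ do not swallow the main term $\frac12 N\log N$. A secondary subtlety is the injectivity claim, which ensures the family is counted without repetition; it is immediate once one inspects the extreme elements of each progression.
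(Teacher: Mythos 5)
Your proposal is correct and follows essentially the same route as the paper: count the homogeneous arithmetic progressions $\{d,2d,\ldots,nd\}$ of odd length $n\geq 3$ contained in $[1,N]$, getting $\left\lfloor N/n\right\rfloor$ for each odd $n$, and estimate the resulting sum via Lemma \ref{harmonic}. You supply two details the paper leaves implicit --- the injectivity of the pair-to-set correspondence and the verification that restricting the harmonic sum to odd $n$ only halves the logarithm --- but the underlying argument is the same.
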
  

\begin{proof}
Using Theorem \ref{Agama1}, we only need to count the number of homogenous arithmetic progression of length $k$ that can be formed from the interval $[1,N]$, where $k$ runs through the odd numbers no bigger than  $N$. Let us consider all the homogenous arithmetic progressions of length $3$ that can be formed from the interval $[1,N]$; clearly there are $\left \lfloor \frac{N}{3}\right \rfloor$ such number of sets. We have the  total count for those of length $5$  to be $\left \lfloor \frac{N}{5}\right \rfloor$. The total count for those of length $j$ is given by $\left \lfloor \frac{N}{j}\right \rfloor$. This culminates into the assertion that the total number of such complete sets is given by \begin{align}\sum \limits_{j\geq 3}\left \lfloor \frac{N}{j}\right \rfloor, \nonumber
\end{align} where $j$ runs over the odd numbers no bigger that $N$. Hence the assertion follows immediately by applying Lemma \ref{harmonic}.
\end{proof}

\begin{example}
Let us consider the interval $[1,10]$. The complete sets $\{1,2,3\}$, $\{2,4,6\}$ and $\{3,6,9\}$, represents complete sets of size $3$ that can be formed from the interval $[1,10]$. Clearly there are $3$ of them.  Again the set $\{1,2,3,4,5\}$ and $\{2,4,6,8,10\}$ represents complete sets of size $5$ that can be formed from the interval. Similarly, there is only one complete set of size $7$ that can be formed from $[1,10]$ and $\{1,2,3,4,5,6,7\}$ is an example. The set $\mathcal{L}=\{1,2,3,4,5,6,7,8,9\}$ represents a complete set of size $9$ that can be formed from $[1,10]$. Thus in total there are at least $7$ constructible complete sets that can be formed from the interval $[1,10]$.  
\end{example}

\begin{conjecture}\label{counting 2}
Let $\mathcal{C}(N)$ be the number of complete subsets of the set $\mathcal{H}:=\{1,2,\ldots, N\}$. Then \begin{align}\mathcal{C}(N)=O(N(\log N)(\log \log N)).\nonumber
\end{align}
\end{conjecture}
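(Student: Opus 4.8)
The plan is to translate completeness into an arithmetic divisibility condition and then bound the number of subsets obeying it. Since every element of $\mathcal{H}$ is positive, the sum of any nonempty $S\subseteq\mathcal{H}$ is positive, so by the definition of completeness a subset $S$ is complete in $\mathbb{Z}$ exactly when $\sum_{a\in S}a$ divides $\prod_{a\in S}a$. Thus
\begin{align}
\mathcal{C}(N)=\#\Big\{S\subseteq[1,N]:\ \sum_{a\in S}a\ \mid\ \prod_{a\in S}a\Big\},\nonumber
\end{align}
and the whole problem is to produce an upper bound for this divisibility-constrained count. I would stratify by cardinality, writing $\mathcal{C}(N)=\sum_{k\ge1}\mathcal{C}_k(N)$, where $\mathcal{C}_k(N)$ is the number of complete $k$-subsets of $[1,N]$, and then bound the small-$k$ and large-$k$ ranges by separate arguments.

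For fixed small $k$ the strategy is to make the divisibility condition explicit and count by elementary divisor arguments. The model case $k=2$ is instructive: writing $\{a,b\}=d\{\alpha,\beta\}$ with $\gcd(\alpha,\beta)=1$, completeness $(a+b)\mid ab$ reduces, after cancelling the coprime factors, to $(\alpha+\beta)\mid d$, whence the number of complete pairs is $\sum_{\gcd(\alpha,\beta)=1}\lfloor N/((\alpha+\beta)\beta)\rfloor\asymp N\log N$ by Lemma \ref{harmonic}. I would seek the analogous normal-form reduction for general fixed $k$: factoring out $d=\gcd(S)$ and analysing which residues of the normalised tuple force $\sum_{a\in S}a\mid\prod_{a\in S}a$, so that $\mathcal{C}_k(N)$ is expressed as a dilation sum $\sum\lfloor N/(\cdot)\rfloor$ and estimated, again through Lemma \ref{harmonic}, by $\ll_k N\log N$. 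The delicate point already at this stage is to keep the implied dependence on $k$ under control, since the final bound forbids more than a $\log\log N$ worth of accumulated loss.

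The crux is therefore twofold. First, the large-$k$ tail must be shown to be negligible: for a $k$-subset the sum is at least $\binom{k+1}{2}$ and typically of order $k^2$, while the available room inside $[1,N]$ shrinks, and I would argue that for a complete set the largest prime factor of $\sum_{a\in S}a$ must itself divide some element $a\le N$, an increasingly stringent constraint that I expect to make $\sum_{k>k_0}\mathcal{C}_k(N)$ absorbable into the main term. Second, and this is where I expect the genuine obstacle to lie, is locating the precise origin of the factor $\log\log N$: the most natural source is Mertens' estimate $\sum_{p\le x}1/p=\log\log x+O(1)$ entering through the prime factors of the sum that must be matched by the product, and the proposal would stand or fall on showing that summing the per-cardinality bounds $\mathcal{C}_k(N)$ against this prime-reciprocal weight yields exactly $O(N(\log N)(\log\log N))$ rather than a larger power of $\log N$. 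Controlling this accumulation uniformly in $k$, together with proving the requisite rarity of high-cardinality complete sets, is the hard part, and is precisely what keeps the statement at the level of a conjecture.
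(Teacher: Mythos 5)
The statement you are trying to prove is presented in the paper only as a conjecture; the paper contains no proof of it, so there is no argument of the author's to compare yours against. Your submission is in any case a programme rather than a proof: the opening reduction (for $S\subseteq[1,N]$ nonempty, completeness is equivalent to $\sum_{a\in S}a \mid \prod_{a\in S}a$) is correct, and so is the computation $\mathcal{C}_2(N)\asymp N\log N$, but everything after that --- the bound $\mathcal{C}_k(N)\ll_k N\log N$ for fixed $k$, the negligibility of the large-$k$ tail, and the extraction of the $\log\log N$ factor from Mertens --- is left as a hope, as you yourself acknowledge in the final sentence.

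More seriously, the programme cannot be completed, because the conjecture is false, and your own reduction already exposes this at $k=3$. Take any pair $x<y$ with $x+y\le N$ and $xy$ even, and set $S=\{x,\,y,\,x+y\}$. Then $\sum_{a\in S}a=2(x+y)$ and $\prod_{a\in S}a=xy(x+y)$, so the quotient is $xy/2\in\mathbb{Z}$ and $S$ is complete; the sets $\{1,2,3\}$, $\{2,3,5\}$ and $\{2,5,7\}$ appearing in the paper are all of this shape. Distinct admissible pairs give distinct sets, since the largest element of $S$ is $x+y$ and determines $\{x,y\}$, and there are $\asymp N^2$ such pairs. Hence $\mathcal{C}_3(N)\gg N^2$, which already dwarfs $N(\log N)(\log\log N)$; your intended bound $\mathcal{C}_k(N)\ll_k N\log N$ therefore fails at $k=3$, and the intuition that the divisibility constraint becomes ``increasingly stringent'' for larger sets points the wrong way: the sum of a $k$-subset of $[1,N]$ is at most $kN\le N^2$, so the condition $\sum\mid\prod$ costs only $O(\log N)$ bits of information regardless of $k$, while the number of candidate $k$-subsets grows like $\binom{N}{k}$. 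One should expect $\mathcal{C}(N)$ to be far larger than any fixed power of $N$. The productive direction here is not to prove the conjectured upper bound but to refute it and to strengthen the paper's lower bound $\mathcal{C}(N)\gg N\log N$, which the family above already improves to $\mathcal{C}(N)\gg N^2$.
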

\bigskip

\begin{remark}
Conjecture \ref{counting 2} tells us a great deal about the distribution of complete subsets of the set $\{1,2\ldots, N\}$. In probabilistic language, it tells us that the chance of any subset of the set $\{1,2,\ldots, N\}$ to be complete is very minimal, since \begin{align}\lim \limits_{N\longrightarrow \infty}\frac{N(\log N)(\log \log N)}{2^{N}}=0. \nonumber
\end{align}
\end{remark}
\section{END REMARKS}
As mentioned earlier, there are some complete sets that are not homogeneous arithmetic progressions. Given the interval $[1, 10]$ it turns out that the sets $\{3,5,7\}$, $\{2,5,7\}$ and $\{2,3,5\}$, that were not taken into account in Theorem \ref{counting1}, are also complete in $\mathbb{Z}$. Such a loss becomes very significant an $N$ is taken sufficiently large. This significant loss indicates something wierd unfolding as $N$ increases without bound, and does suggest the lower bound $\mathcal{C}(N)\gg N\log N$ is not the best possible and can be improved. To this end we raise some questions whose answer may be attributed to such a loss.

\begin{question}
Does there exist complete sets of the form $\{r,r^2,r^3,\ldots, r^n\}$?
\end{question}

\begin{question}
If the set $\mathcal{F}$ is complete, does there exist some integer $s<M$ such that $\mathcal{F}+\{s\}$ is complete?
\end{question}




\bibliographystyle{amsplain}

\end{document}